\numberwithin{equation}{section}
\newtheorem{lemma}{Lemma}
\newtheorem{thm}{Theorem}
\newtheorem*{thrs}{Theorem (R\"uckert, Schleicher)}
\newtheorem*{cor}{Corollary}
\theoremstyle{remark}
\newtheorem*{ack}{Acknowledgment}
\def\R{\mathbb{R}}
\def\N{\mathbb{N}}
\def\C{\mathbb{C}}
\def\CC{\widehat{\mathbb{C}}}
\def\bl{\begin{lemma}}
\def\el{\end{lemma}}
\def\dist{\operatorname{dist}}
\begin{document}
\title[Newton's method]{Newton's method and Baker domains}
\author{Walter Bergweiler}
\thanks{Supported by the G.I.F.,
the German--Israeli Foundation for Scientific Research and
Development, Grant G-809-234.6/2003, 
the EU Research Training Network CODY,
and the ESF Research Networking Programme HCAA}
\address{Mathematisches Seminar,
Christian--Albrechts--Universit\"at zu Kiel,
Lude\-wig--Meyn--Str.~4,
D--24098 Kiel,
Germany}
\email{bergweiler@math.uni-kiel.de}
\subjclass{Primary 37F10;  Secondary 30D05, 49M15, 65H05}
\begin{abstract}
We show that there exists an entire function $f$ without zeros
for which the associated Newton function $N_f(z)=z-f(z)/f'(z)$ is
a transcendental meromorphic functions without Baker domains.
We also show that there exists an entire function $f$ with exactly
one zero for which the complement of the immediate attracting basin
has at least two components and contains no 
invariant Baker domains of~$N_f$. The second  result answers a question of
J.~R\"uckert and D.~Schleicher while the first one gives a partial
answer to a question of X.~Buff.
\end{abstract}
\maketitle
\section{Introduction and results} \label{intro}
Newton's method
for finding the zeros of an entire $f$ consists of
iterating the meromorphic function
$$N_f(z):=z-\frac{f(z)}{f'(z)},$$
see~\cite{Ber93} for 
an introduction to the iteration theory of meromorphic functions, 
including a section on Newton's method.
If $\xi$ is a zero of~$f$, then 
there exists an
$N_f$-invariant component $U$ of the Fatou set of $N_f$ containing $\xi$
in which the iterates $N_f^k$ of $N_f$ converge to $\xi$ as $k\to\infty$.
This domain $U$ is called the {\em immediate basin} of~$\xi$.

There may also be $N_f$-invariant components of the Fatou set of $N_f$
in which the iterates of $N_f$ tend to~$\infty$.  
We call such an $N_f$-invariant domain a {\em virtual immediate basin}.
(This is in slight deviation from~\cite{BufRuc,RucSch} where the definition
of a virtual immediate basin additionally includes the existence of an 
``absorbing set''; cf.\ the remark in~\S 3.3.)
It was suggested by Douady that the existence of virtual immediate basins
is related to $0$ being an asymptotic value of~$f$.
This relationship was investigated in~\cite{BerDraLan,BufRuc}.
While it was shown in~\cite{BerDraLan} that in general the
existence of  a virtual immediate basin does not imply that $0$ is an
asymptotic value of~$f$, this conclusion was shown to be true under
suitable additional hypotheses in~\cite{BufRuc}.
It was also shown in~\cite{BufRuc} that if $f$ has a logarithmic 
singularity over~$0$, then $N_f$ has a virtual immediate basin.

If $f$ has the form $f=Pe^Q$ where $P$ and $Q$ are polynomials,
with $Q$ nonconstant, then the Newton function
$N_f$ is rational, $\infty$ is
a parabolic fixed point  of $N_f$ and the
associated parabolic domains are virtual immediate basins.
If $f$ does not have the above form, then $N_f$ is transcendental.
An invariant Fatou component where the iterates of $N_f$ tend
to $\infty$ is then called an {\em invariant Baker domain}.
So except in the case where $f=Pe^Q$ a virtual immediate basin
is an invariant Baker domain.

If $f$ has no zeros, then $f$ has the asymptotic value $0$ by 
Iversen's theorem~\cite[p.~292]{Ne}. This suggests that there could always
be virtual immediate basins if there are no zeros.
We show that this is not the case in general.
\begin{thm}
\label{th1}
There exists an entire function $f$ without zeros for which $N_f$ is 
a transcendental meromorphic function without invariant Baker domains.
\end{thm}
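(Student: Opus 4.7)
The plan is to seek $f$ in the form $f = e^g$ with $g$ entire, which automatically makes $f$ zero-free. A direct computation gives
\[
N_f(z) = z - \frac{1}{g'(z)},
\]
a meromorphic function whose poles are exactly the zeros of $g'$; it is transcendental precisely when $g$ is not a polynomial. The task thus reduces to constructing a transcendental entire $g$ for which $z\mapsto z-1/g'(z)$ has no invariant Baker domain.

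The strategy is to exploit the structural information that an invariant Baker domain $U$ of $N_f$ would carry. The self-map $N_f|_U\colon U\to U$ is a hyperbolic non-expansion of $U$, and by Cowen's classification it is semi-conjugate on a suitable ``absorbing'' subset of $U$ to a M\"obius transformation on a standard model; in particular, consecutive iterates along any orbit in this absorbing set stay a bounded hyperbolic distance apart in $U$ while tending to $\infty$. Combined with the fact that every zero of $g'$ is a pole of $N_f$ and hence lies in the Julia set of $N_f$ --- so in particular outside $U$ --- this puts genuine geometric constraints on how $U$ can reach infinity relative to the zero set of $g'$.

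The construction of $g$ is then approximation-theoretic. Using Arakelyan's theorem (or a Mittag--Leffler-type series directly for $g'$), I would arrange that $g'$ has zeros densely placed on a sequence of concentric circles $|z|=r_n$, $r_n\to\infty$, with the growth of $g'$ carefully controlled on the complementary annular regions. The zeros of $g'$ then become poles of $N_f$ densely distributed on each circle, so every unbounded open set lying in their complement is pinched arbitrarily thin infinitely often as one goes to infinity. Such a domain cannot contain the unbounded absorbing tube of definite hyperbolic width that Cowen's theorem demands for an invariant Baker domain, ruling out the latter.

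The main obstacle will be the quantitative calibration of the construction. One must balance the density of the zeros of $g'$ on each circle against the growth of $g'$ between circles (forced by these zeros via Jensen's formula), against the hyperbolic-width lower bound on the absorbing tube, and against what an approximation theorem can actually realize for an \emph{entire} function. The delicate point is to make the pinching along each circle so severe that even Baker domains of very irregular geometry --- for instance those spiralling between successive circles --- cannot accommodate an absorbing tube compatible with the remaining estimates. Verifying this rigorously, and producing an approximation scheme that yields such an entire $g$, is where the real work of the proof lies.
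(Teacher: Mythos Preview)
Your outline is essentially the paper's approach: write $f=e^g$ so that $N_f(z)=z-1/g'(z)$, place the zeros of $g'$ on a sequence of circles, and derive a contradiction from the fact that in an invariant Baker domain $U$ consecutive iterates must stay a bounded hyperbolic distance apart while tending to~$\infty$. Two places where the paper is more direct than your sketch are worth noting.

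First, Cowen's theorem is unnecessary (and would in any case need $U$ simply connected, which is not known a priori for Baker domains of Newton maps). The paper simply joins $z_0$ to $N_f(z_0)$ by a compact arc $\gamma_0\subset U$, sets $\gamma=\bigcup_{j\ge0}N_f^j(\gamma_0)$, and uses Schwarz--Pick alone to get $\lambda_U(z,N_f(z))\le K$ for every $z\in\gamma$; this curve reaches $\infty$ and crosses each circle $|z|=r_k$ at some $z_k$.

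Second, no Arakelyan or Mittag--Leffler step is needed: the paper takes $g'(z)=h(z)=\prod_k\bigl(1+(z/r_k)^{n_k}\bigr)$ explicitly. The ``quantitative calibration'' you anticipate then becomes transparent. On $|z|=r_k$ the modulus of $h$ is controlled by the zeros on \emph{inner} circles only (the $k$-th factor is bounded there, the outer ones negligible), giving $|h|\le r_k^{3n_{k-1}}$, while the spacing of the poles on that circle is $\asymp r_k/n_k$. Choosing $n_k\ge r_k^{4n_{k-1}}$ makes the Euclidean step $|N_f(z_k)-z_k|=1/|h(z_k)|$ huge compared to that spacing. Comparing $\lambda_U$ with $\lambda_{\C\setminus\{a_k,b_k\}}$ for the two nearest poles $a_k,b_k$ and normalizing affinely to $\C\setminus\{0,1\}$ then forces $\lambda_U(z_k,N_f(z_k))\to\infty$, contradicting the bound~$K$. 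This is a cleaner mechanism than the ``tube of definite hyperbolic width'' picture: the point is not that $U$ is too thin to contain a tube, but that the single step $z_k\mapsto N_f(z_k)$ is hyperbolically long because it jumps far relative to the gate between two nearby poles.
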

The following corollary is obvious.
\begin{cor}
\label{co1}
There exists 
a transcendental meromorphic function without fixed points and
without invariant Baker domains.
\end{cor}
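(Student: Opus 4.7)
The plan is to deduce Corollary~\ref{co1} immediately from Theorem~\ref{th1}. I would simply take the function $N:=N_f$ produced by Theorem~\ref{th1}, where $f$ is entire without zeros and $N_f$ is a transcendental meromorphic function lacking invariant Baker domains; this already supplies two of the three required properties of the corollary.

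It remains only to verify that $N$ has no fixed points. From the definition $N_f(z)=z-f(z)/f'(z)$, the equation $N_f(z)=z$ for $z\in\C$ is equivalent to $f(z)/f'(z)=0$. Since $f$ has no zeros by Theorem~\ref{th1}, the quotient $f/f'$ vanishes nowhere in $\C$, so $N_f$ has no finite fixed points; and as $N_f$ is a transcendental meromorphic function, $\infty$ is an essential singularity rather than a fixed point. Consequently $N$ has no fixed points in $\C$, which completes the deduction.

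There is no real obstacle here: the corollary is essentially a repackaging of Theorem~\ref{th1} together with the classical observation that the fixed points of Newton's method correspond precisely to the zeros of the underlying function. The entire substance of the statement is absorbed into Theorem~\ref{th1}, which is why the author treats the corollary as obvious.
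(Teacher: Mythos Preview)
Your proposal is correct and matches the paper's approach: the author simply declares the corollary obvious from Theorem~\ref{th1}, and your argument spells out exactly the obvious reason, namely that the fixed points of $N_f$ in $\C$ coincide with the zeros of~$f$.
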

This is a partial answer to a question of Buff who had asked whether
there exists a transcendental  
{\em entire} function without fixed points and
without invariant Baker domains.

R\"uckert and Schleicher~\cite{RucSch} have shown that if $f$ is a polynomial
and if $U$ is the immediate basin of a zero, then each 
component of $\C\setminus U$ contains the basin of another zero. They 
deduce this result from a more general result dealing with the case
that $f$ is entire but not necessarily  a polynomial. To state
this result, let again $U$ be 
the immediate basin of a zero $\xi$ of $f$ 
and suppose that there
are two $N_f$-invariant curves $\Gamma_1$ and $\Gamma_2$ which connect
$\xi$ to $\infty$ in $U\cup \{\infty\}$, which intersect only in $\xi$
and $\infty$ and which are not homotopic (with fixed endpoints) in
$U\cup \{\infty\}$. Let $\widetilde{V}$ be a component of 
$\C\setminus \left(\Gamma_1 \cup \Gamma_2\right)$. 
With these notations their main 
result~\cite[Theorem~5.1]{RucSch} takes the following form.
\begin{thrs}
If no point in $\CC$ has infinitely many preimages within~$\widetilde{V}$,
then the set $V:=\widetilde{V}\setminus U$ contains an immediate basin
or a virtual immediate basin of~$N_f$.
\end{thrs}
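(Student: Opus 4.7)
My plan is to argue by contradiction. Suppose $V=\widetilde{V}\setminus U$ contains neither an immediate basin of a zero of $f$ nor a virtual immediate basin of $N_f$, and derive a contradiction from the dynamics of $N_f$ on $\widetilde{V}$.

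The first step is to establish the forward invariance $N_f\bigl(\widetilde{V}\setminus N_f^{-1}(\infty)\bigr)\subset \widetilde{V}$. Here one uses that $\Gamma_1\cup\Gamma_2\cup\{\xi,\infty\}$ is a Jordan curve in $\CC$ whose regular part is carried into itself by $N_f$, together with a local orientation check at a point of $\Gamma_i\setminus\{\xi,\infty\}$: since $N_f$ is locally conformal there, the $\widetilde{V}$-side of the curve is mapped to the $\widetilde{V}$-side. Poles of $N_f$ inside $\widetilde{V}$ simply send points to the boundary point $\infty$.

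Next I would take an arbitrary Fatou component $W$ of $N_f$ with $W\subset V$ and follow its forward orbit. By the invariance each $N_f^n(W)$ remains in $\widetilde{V}$. The contradiction hypothesis rules out the orbit settling on a periodic attracting or parabolic cycle inside $V$, and, again using invariance, any virtual immediate basin capturing the orbit would have to lie in $\widetilde{V}$ and hence in $V$, which is also excluded. The only remaining possibility is that some iterate $N_f^n(W)$ is contained in $U$; thus every Fatou component of $N_f$ in $V$ is a preimage component of $U$. The same analysis applies to points of the Julia set in $V$: their forward orbit must eventually pass through $U$ or accumulate on the Julia points of $U$.

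The key and most delicate step is the third one, where I exploit the hypothesis that no point in $\CC$ has infinitely many preimages in $\widetilde{V}$. This makes $N_f|_{\widetilde{V}}$ essentially a finite-type holomorphic map, with only finitely many components of $N_f^{-n}(U)\cap\widetilde{V}$ meeting any given compact set. Combined with the non-homotopy of $\Gamma_1$ and $\Gamma_2$ in $U\cup\{\infty\}$ --- which guarantees that $V$ is topologically substantial and not contained in $\overline{U}$ --- this should force a Fatou component of $N_f$ inside $V$ that is \emph{not} a preimage of $U$: either a periodic one, giving an immediate basin, or an escaping one, giving a virtual immediate basin. The main obstacle I anticipate is precisely this last point: making rigorous, from only the finite-preimage hypothesis and the non-homotopy of $\Gamma_1,\Gamma_2$, the heuristic that $V$ is ``too big to be tiled only by preimages of $U$''. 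I would expect this to require a Riemann--Hurwitz count or a prime-end / hyperbolic-geometry analysis at the two accesses to $\infty$ prescribed by $\Gamma_1$ and $\Gamma_2$.
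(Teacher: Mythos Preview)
There is nothing to compare your proposal against: the paper does \emph{not} prove this statement. The theorem is quoted verbatim from R\"uckert and Schleicher~\cite[Theorem~5.1]{RucSch} and serves only as background and motivation for Theorem~\ref{th2}; the present paper's contribution is to show that the finiteness hypothesis in that theorem cannot be dropped, not to reprove the theorem itself. If you want to see the actual argument you must consult the original paper~\cite{RucSch}.

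As for your sketch on its own terms: the first two steps are reasonable in spirit, though the ``local orientation check'' is not quite right as stated, since $N_f$ may well have critical points on $\Gamma_i$ (the curves are merely $N_f$-invariant, not assumed to avoid critical points). More importantly, in the second step you quietly assume a classification of the possible $\omega$-limit behaviour of Fatou components that excludes wandering domains and rotation domains; for Newton maps of entire functions this is far from automatic and in fact is not how R\"uckert and Schleicher proceed. Finally, you yourself flag the real gap: the third step is only a heuristic. The phrase ``$V$ is too big to be tiled only by preimages of $U$'' is not an argument, and neither a Riemann--Hurwitz count nor a vague appeal to prime ends will by itself produce the required periodic or escaping component. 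The actual proof in~\cite{RucSch} is organised quite differently and does not go via a contradiction of this shape.
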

R\"uckert and Schleicher raise 
the question whether the hypothesis that 
no point in $\CC$ has infinitely many preimages within $\widetilde{V}$
is necessary. We show that this is indeed the case.
\begin{thm}
\label{th2}
There exists an entire function $g$ with exactly one zero at $0$
such that the immediate basin of $0$ contains~$\R$,
but $N_g$ has no virtual immediate basin.
\end{thm}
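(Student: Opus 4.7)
The plan is to obtain $g$ by setting $g(z):=zf(z)$, where $f$ is an entire function without zeros, real and positive on~$\R$, produced by a variant of the construction underlying Theorem~\ref{th1}. Then $g$ is entire with exactly one zero, at~$0$. Writing $f=e^h$ for some entire $h$, a direct computation gives
$$
N_g(z)=\frac{z^2h'(z)}{1+zh'(z)},\qquad N_g(z)-z=-\frac{z}{1+zh'(z)},
$$
together with the equivalent identity $N_g(z)=z^2/(2z-N_f(z))$. These formulas reveal a dichotomy at infinity: where $|zh'(z)|$ is large, $N_g(z)-z\approx -1/h'(z)=N_f(z)-z$, so $N_g$ mimics~$N_f$; where $|zh'(z)|$ is small, $N_g(z)\approx 0$.

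To put $\R$ into the immediate basin of~$0$, I would require in the construction that $h'(x)=o(1/x^2)$ as $|x|\to\infty$ along~$\R$. The explicit formula then gives $N_g(x)\to 0$ for large real~$x$; combined with the fact that $N_g$ maps $\R$ into $\R$ and has $0$ as its only real fixed point (with a routine check on compact subintervals), every real orbit converges to~$0$. Hence $\R$ is contained in the Fatou component of $N_g$ containing~$0$, which is the immediate basin.

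The crux is to exclude an invariant Baker domain~$U$ of~$N_g$. Suppose $z_k:=N_g^k(z_0)\to\infty$ in~$U$. An orbit cannot linger in the region $\{|zh'|\ll 1\}$, since there $N_g(z)\approx 0$, so eventually $z_k$ remains in the region where $N_g\approx N_f$ on the orbit. Using the standard estimates on hyperbolic escape rate in Baker domains (Baker, Rippon), $|z_{k+1}-z_k|=o(|z_k|)$, and one then wishes to upgrade ``$N_f$ close to identity on $(z_k)$'' to the existence of an escaping invariant Fatou component for~$N_f$, contradicting Theorem~\ref{th1}. The principal obstacle is precisely this transfer: since $N_f$ and $N_g$ are not semiconjugate, the absence of invariant Baker domains of~$N_f$ does not pass formally to~$N_g$. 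I therefore expect the argument to require redoing the construction of Theorem~\ref{th1} from the start, with the zero at~$0$ inserted and the approximation (by an Arakelyan-type theorem on a common exhaustion of~$\C$) tuned to control $N_f$ and $N_g$ simultaneously. Balancing the sharp decay $h'(x)=o(1/x^2)$ along $\R$ with the growth conditions at infinity needed to preclude Baker domains is the delicate technical point.
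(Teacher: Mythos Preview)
Your proposal has a genuine gap precisely at the step you yourself flag: the exclusion of invariant Baker domains for~$N_g$. With $g=zf$ and $f=e^h$, the poles of $N_g$ lie on the set $\{1+zh'(z)=0\}$, not at the zeros of~$h'$. But the hyperbolic-metric argument that proves Theorem~\ref{th1} hinges on having, on each circle $|z|=r_k$, explicitly located points outside~$U$ (the zeros of the infinite product, which are poles of~$N_f$) whose mutual spacing is much smaller than the step $|N_f(z)-z|$. For your $N_g$ this structure is lost, and your plan to ``redo the construction'' via Arakelyan-type approximation, controlling $N_f$ and $N_g$ simultaneously, is not carried out. The extra constraint you impose, $h'(x)=o(1/x^2)$ along~$\R$, is moreover incompatible with the explicit product used in Theorem~\ref{th1} (which satisfies $h'(x)\to\infty$ on~$\R$), so this really would require a construction from scratch rather than a variant.

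The paper sidesteps all of this by choosing $g$ differently: instead of $g=zf$, it sets
\[
g(z)=z\exp\!\left(\int_0^z\frac{h(t)-1}{t}\,dt\right),
\]
with $h$ the \emph{same} infinite product as before. A direct computation gives $N_g(z)=z\bigl(1-1/h(z)\bigr)$, so the poles of $N_g$ are exactly the zeros of~$h$, and the hyperbolic argument of Theorem~\ref{th1} applies verbatim up to a harmless extra factor $|z_k|=r_k$ in the step size. For the basin statement, since $h(x)>1$ for all $x\in\R\setminus\{0\}$ one has $|N_g(x)|<|x|$, so every real orbit converges to~$0$; no decay of $h$ along~$\R$ is needed at all.
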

The functions $f$ and $g$ in Theorems~\ref{th1} and~\ref{th2}
can be given explicitly. Let $(r_k)$ be a sequence of real numbers
tending to $\infty$ and let $(n_k)$ be a sequence of positive
integers satisfying $n_k\geq k$ 
for all $k\in\N$.
Then 
\begin{equation}
\label{defh}
h(z):=\prod_{k=1}^\infty \left(1+\left(\frac{z}{r_k}\right)^{n_k}
\right)
\end{equation}
defines an entire function~$h$.
We shall show that if 
\begin{equation}
\label{conditions}
r_k\geq 2 r_{k-1} \geq 2,\quad 
n_k\geq \sum_{j=1}^{k-1} n_j\quad  \text{and}\quad
n_k\geq r_k^{4n_{k-1}}
\end{equation}
for $k\geq 2$,
then the functions
\begin{equation}
\label{deff}
f(z):= \exp\left( \int_0^z h(t)dt\right)
\end{equation}
and 
\begin{equation}
\label{defg}
g(z):= z  \exp\left( \int_0^z \frac{h(t)-1}{t}dt\right)
\end{equation}
satisfy the conclusions of Theorems~\ref{th1} and~\ref{th2},
respectively.
\begin{ack}
I thank Xavier Buff, 
Nuria Fagella,
Lasse Rempe, Phil Rippon,
Johannes R\"uckert and Dierk Schleicher for useful
discussions on the topics of this paper.
\end{ack}
\section{Proofs of Theorem~\ref{th1} and~\ref{th2}}
We denote the open disk of radius $r$ around a point $a\in\C$ by
$D(a,r)$.
The hyperbolic metric in a plane domain $U$ is denoted by~$\lambda_U$.
By $\dist_U(z,A)$ we denote
the hyperbolic distance between a point  $z$ and a set~$A$.
We shall make use of the fact that if $A\subset U$ and if
$z_0$ is a point which is in the boundary of $U$ but not in the
closure of~$A$, then 
$\dist_U(z,A)\to \infty$ as $z\to z_0$, $z\in U$. In particular, we have
\begin{equation}
\label{hypmet}
\lim_{z\to \infty}
\dist_{\C\setminus\{0,1\}}
\left(z,D\left(\tfrac12,\tfrac12\right)\right)=\infty.
\end{equation}

To estimate the growth of $h$ we note that~\eqref{conditions} implies 
that
if  $k\geq 3$, then
$r_k\geq 2^{k-1}\geq 4$ and 
$n_{k-1}\geq r_{k-1}^{4n_{k-2}}\geq 2^{(k-1)4n_{k-2}}\geq  k+1$.
For $|z|=r_k$ where $k\geq 3$ we thus obtain
\begin{eqnarray*}
\log |h(z)|
&\leq &
\sum_{j=1}^{k-1} \log\left(1+\left|\frac{r_k}{r_j}\right|^{n_j}\right)
+\log 2 
+\sum_{j=k+1}^\infty \log\left(1+\left|\frac{r_k}{r_j}\right|^{n_j}\right)\\
&\leq &
\sum_{j=1}^{k-1} \log\left(1+{r_k}^{n_j}\right)
+\log 2
+\sum_{j=k+1}^\infty \left|\frac{r_k}{r_j}\right|^{n_j}\\
&\leq &
\sum_{j=1}^{k-1} \log\left(2{r_k}^{n_j}\right)
+\log 2
+\sum_{j=k+1}^\infty 2^{-n_j}\\
&\leq &
k\log 2 + \left(\sum_{j=1}^{k-1} n_j \right) \log r_k +1\\
&\leq & 
(k + 2 n_{k-1} +1) \log r_k \\
&\leq & 
3 n_{k-1} \log r_k.
\end{eqnarray*}
Hence
\begin{equation}
\label{estimateh}
|h(z)|\leq r_k^{3 n_{k-1}}.
\end{equation}
for $|z|=r_k$ and $k\geq 3$.

\begin{proof}[Proof of Theorem~\ref{th1}]
Let $h$ and $f$  be defined by~\eqref{defh} and~\eqref{deff} so that
$$N_f(z)=z-\frac{1}{h(z)}.$$
Suppose that
$N_f$ has an invariant Baker domain~$U$. 
Take $z_0\in U$ and connect $z_0$ by a curve $\gamma_0$ in $U$
to $N_f(z_0)$. Then $\gamma:=\bigcup_{j=0}^\infty N_f^j(\gamma_0)$
is a curve in $U$ which connects $z_0$ to~$\infty$.
By compactness, there exists $K\geq 0$ such that 
$\lambda_U(z,N_f(z))\leq K$ for all $z\in\gamma_0$.
Since every $z\in\gamma$ has the form $z=N_f^j(\zeta)$ for some
$\zeta\in\gamma_0$ and some $j\geq 0$
and since the holomorphic self-map $N_f$ of
$U$ does not increase hyperbolic distances 
this implies that 
\begin{equation}
\label{hypgamma}
\lambda_U(z,N_f(z))\leq K\quad  \text{for} \quad z\in\gamma.
\end{equation}
For large $k$ the curve $\gamma$ intersects the
circle $\{z:|z|=r_k\}$. Let
$z_k$ be a point of intersection.
Define
$$P_k:=\left\{r_ke^{(2 \nu+1) \pi i /n_k}:0\leq \nu\leq
n_k-1\right\}.$$
The $n_k$ points of $P_k$ are zeros of $h$ and
hence poles of~$N_f$. Thus $P_k\cap U=\emptyset$ for all $k\in\N$.
For $k\geq 2$ we have $n_k\geq r_k^4\geq 16$
so that $P_k$
contains more than one point.
Let $a_k,b_k$ the points of $P_k$ which are closest to~$z_k$.
Then
\begin{equation} \label{akbk}
\left|a_k-b_k\right|
=\left|e^{ 2 \pi i/n_k}-1\right|
\leq \frac{4\pi}{n_k}
\end{equation}
and 
\begin{equation} \label{akbk2}
z_k\in D\left( \tfrac12(a_k+b_k), \tfrac12|a_k-b_k|\right).
\end{equation}
Define $L_k:\C\setminus\{a_k,b_k\}\to \C\setminus\{0,1\}$ by
$L_k(z)=(z-a_k)/(b_k-a_k)$.
Then
\begin{equation} \label{hypmet2}
\lambda_{\C\setminus\{0,1\}}(L_k(z_k),L_k(N_f(z_k))) 
=\lambda_{\C\setminus\{a_k,b_k\}}(z_k,N_f(z_k))
\leq \lambda_U(z_k,N_f(z_k))
\leq K
\end{equation}
by~\eqref{hypgamma}.
By~\eqref{akbk2} we have
$L_k(z_k)\in D\left(\tfrac12,\tfrac12\right)$.
On the other hand,~\eqref{estimateh},~\eqref{akbk} and~\eqref{conditions} 
imply that
\begin{eqnarray*}
|L_k(N_f(z_k))|
&\geq &
|L_k(N_f(z_k))-L_k(z_k)|-|L_k(z_k)|\\
&=&
\frac{|N_f(z_k)-z_k|}{|a_k-b_k|}-|L_k(z_k)|\\
&=&
\frac{1}{|h(z_k) (a_k-b_k)|}-|L_k(z_k)|\\
&\geq& 
\frac{n_k}{4\pi r_k^{3n_{k-1}}} - 1 \\
&\geq&
\frac{r_k^{n_{k-1}}}{4\pi}-1
\end{eqnarray*}
and thus $|L_k(N_f(z_k))|\to\infty$ as $k\to\infty$.
Combining this with~\eqref{hypmet} we see that
$\lambda_{\C\setminus\{0,1\}}(L_k(z_k),L_k(N_f(z_k)))\to\infty$
as $k\to\infty$,
contradicting~\eqref{hypmet2}.
\end{proof}

\begin{proof}[Proof of Theorem~\ref{th2}]
Let $h$ and $g$  be defined by~\eqref{defh} and~\eqref{defg} so that
$$N_g(z)=z-\frac{z}{h(z)}=z\left(1-\frac{1}{h(z)}\right).$$ 
The proof that $N_g$ has no Baker domains proceeds exactly as 
the proof of Theorem~\ref{th1}.
(We only obtain 
$$|L_k(N_f(z_k))|\geq  \frac{r_k^{n_{k-1}-1}}{4\pi} -1,$$ 
but this still gives a contradiction.)

Since $h$ is real on the real axis, the same holds for~$N_g$,
and since $h(x)> 1$ for all $x\in\R\setminus\{0\}$ we see 
that $|N_g(x)|<|x|$ for all $x\in\R\setminus\{0\}$.
This implies that $N_g^k(x)\to 0$ as $k\to\infty$, for all $x\in\R$.
Hence $\R$ is contained in the immediate basin of~$0$.
\end{proof}
\section{Remarks}
\subsection*{1.} 
It follows from the result of Buff and R\"uckert~\cite{BufRuc}
already mentioned
in the introduction that the function $f$ of Theorem~\ref{th1} 
has no logarithmic singularity over~$0$. Another example of an
entire function without zeros and with no logarithmic singularity over~$0$
was given in~\cite{BerEre}.

\subsection*{2.} 
The invariant components of the Fatou set of a meromorphic function
can be classified; see~\cite{Ber93}. For functions without fixed 
points there are only two possible types of invariant components:
Baker domains and Herman rings. Fagella, Jarque and Taixes~\cite{FagJarTaiII}
have shown that a meromorphic function without fixed points does
not have Herman rings. This implies that for a function $f$ satisfying
the conclusion of Theorem~\ref{th1} the Fatou set of $N_f$
has no invariant component at all. Probably there also exist
entire functions $f$ for which the Fatou set of $N_f$ is empty.

\subsection*{3.} 
It was shown by Przyztycki \cite{Prz} that if $f$ is a poynomial,
then the immediate basin of each zero is simply connected.
Shishikura~\cite{Shi90} showed that in fact the Julia set of $N_f$ is connected;
that is, all Fatou components of $N_f$ are simply connected.
It is not known whether this last result also holds if $f$ is
an entire transcendental function, but Mayer and 
Schleicher~\cite{MaySch} have shown that immediate basins are
simply connected. Fagella, Jarque and Taixes~\cite{FagJarTaiI,FagJarTaiII}
have extended this result by showing that
immediate attracting and parabolic basins (of any period) are 
simply connected and that preimages of simply connected Fatou components
of $N_f$ are simply connected.
However, it remains open whether invariant Baker domains of 
$N_f$ are necessarily simply connected. If this is true, then
our definition of virtual immediate basins coincides with that
given in~\cite{BufRuc,RucSch} since then the additional condition
on the existence of an absorbing set is always satisfied; cf.\ the discussion
in~\cite{BufRuc,RucSch}.


\begin{thebibliography}{99}

\bibitem{Ber93}
W.\ Bergweiler,
Iteration of meromorphic functions.
{\rm Bull.\ Amer.\ Math.\ Soc.\ (N.\ S.)} 29 (1993), 151--188.
\bibitem{BerDraLan}
W.\ Bergweiler, D.\ Drasin and J.\ K.\ Langley,
Baker domains for Newton's method,
Ann.\ Inst.\ Fourier 57 (2007), 803--814.
\bibitem{BerEre}
W.\ Bergweiler and A.\ Eremenko,
Direct singularities and completely invariant domains
of entire functions,
Illinois Math.\ J., to appear.
\bibitem{BufRuc}
X.\ Buff and J.\ R\"uckert,
Virtual immediate basins of Newton maps and asymptotic values,
Int.\ Math.\ Res.\ Not.\  2006, 65498, 1--18.
\bibitem{FagJarTaiI}
N.\ Fagella, X.\ Jarque and J.\ Taixes, 
Connectivity of Julia sets of transcendental meromorphic maps
and weakly repelling fixed points (I), preprint.
\bibitem{FagJarTaiII}
N.\ Fagella, X.\ Jarque and J.\ Taixes, 
Connectivity of Julia sets of transcendental meromorphic maps
and weakly repelling fixed points (II), in preparation.
\bibitem{MaySch}
S.\ Mayer and D.\ Schleicher,
Immediate and virtual basins of Newton's method for entire functions,
Ann.\ Inst.\ Fourier 56 (2006), 325--336.
\bibitem{Ne}
R.\ Nevanlinna,
{\rm Eindeutige analytische Funktionen},
Springer, Berlin, G\"ottingen, Heidelberg, 1953.
\bibitem{Prz}
F.\ Prztycki,
Remarks on simple-connectedness of basins of sinks for 
iterations of rational maps,
in ``Dynamical Systems and Ergodic Theory,''
Banach Center Publ.\ 23 (1989), 229--235.
\bibitem{RucSch}
J.\ R\"uckert and D.\ Schleicher,
On Newton's method for entire functions,
J. London Math.\ Soc.\ 75 (2007), 659--676.
\bibitem{Shi90} M.\ Shishikura,
The connectivity of the Julia set and fixed point,
Preprint IHES/M/90/37, Institut des Hautes Etudes Scientifiques, 1990.
\end{thebibliography}
\end{document}